\newtheorem{lemma}{Lemma}[section]
\newtheorem{theorem}[lemma]{Theorem}
\newtheorem{proposition}[lemma]{Proposition}
\theoremstyle{definition}
\newtheorem{definition}[lemma]{Definition}
\theoremstyle{remark}
\newtheorem{remark}[lemma]{Remark}
\newtheorem{remarks}[lemma]{Remarks}
\def\SN{\mathbb N}    
\def\SZ{\mathbb Z}    
\def\SQ{\mathbb Q}    
\def\SP{\mathbb P}    
\def\aa{\mathfrak a}    
\def\mm{\mathfrak m}    
\def\pp{\mathfrak p}    
\def\PP{\mathfrak P}
\def\II{\mathfrak I} 
\def\int{\mbox{\rm{Int}}}             
\def\Cc{\mathcal{C}}
\def\Aa{\mathcal{A}}
\def\Ss{\mathcal{S}}
\def\Oo{\mathcal{O}}
\def\Pp{\mathcal{P}}
\def\Ss{\mathcal{S}}
\def\Gal{\mathrm{Gal}}
\def\Max{\mathrm{Max}}
\def\Card{\mathrm{Card}}
\def\be{\begin{equation}}
\def\ee{\end{equation}}
\begin{document}

\title[P\'olya groups of non-galoisian number fields]{From P\'olya fields to P\'olya groups \\(II) Non-galoisian number fields}

\author{Jean-Luc Chabert}
\address{LAMFA (UMR-CNRS 7352), Universit\'e de Picardie, 33 rue Saint Leu, 80039 Amiens, France}
\email{jean-luc.chabert@u-picardie.fr}



\maketitle


\begin{abstract}
The P\'olya group of a number field $K$ is the subgroup of the class group of $K$ generated by the classes of the products of the maximal ideals with same norm. A P\'olya field is a number field whose P\'olya group is trivial. Our purpose is to start with known assertions about P\'olya fields to find results concerning P\'olya groups. In this second paper we describe the P\'olya group of some non-galoisian extensions of $\SQ$.
\end{abstract}

\section{Introduction}

\noindent{\bf Notation}. Let $K$ be a number field with ring of integers $\Oo_K$. For each positive integer $q$, let 
$$ \Pi_q(K)=\prod_{\mm\in\Max(\Oo_K),\,\vert\Oo_K/\mm\vert\,=\,q}\,\mm\,.$$
If $q$ is not the norm of a maximal ideal of $\Oo_K$, then by convention $\Pi_q(K)=\Oo_K$. 

\medskip

The notion of P\'olya fields goes back to P\'olya~\cite{bib:polya} and Ostrowski~\cite{bib:ostrowski} while its formal definition is given by Zantema~\cite{bib:zantema}:

\begin{definition}
A {\em P\'olya field} is a number field $K$ such that all the ideals $\Pi_q(K)$ are principal.
\end{definition}

The notion of P\'olya group of $K$ was introduced later in \cite[\S II.4]{bib:CC} as a measure of the obstruction for $K$ to be a P\'olya field. 

\begin{definition}
The {\em P\'olya group}  of a number field $K$ is the subgroup $\Pp o(K)$ of the class group $\Cc l(K)$ generated by the classes of the ideals $\Pi_q(K)$.	
\end{definition}

Although the P\'olya group of $K$ is also the subgroup of $\Cc l(K)$ generated by the classes of the factorials ideals of $K$ as defined by Bhargava~\cite{bib:bhargava1998} and that, following P\'olya~\cite{bib:polya}, a P\'olya field is in fact a number field $K$ such that the $\Oo_K$-module $\{f\in K[X]\mid f(\Oo_K)\subseteq \Oo_K\}$ formed by the integer-valued polynomials on $K$ admits a basis formed by polynomials of each degree, we will not use these properties.
Of course, the field $K$ is a P\'olya field if and only if its P\'olya group is trivial. So that, every result about P\'olya group has a corollary which is a result about P\'olya fields. 
Here we are going to make a reverse lecture of this fact: we make the conjecture that an assertion about P\'olya fields is the evidence of a statement about P\'olya groups.

Almost all the results about either P\'olya fields or P\'olya groups are obtained in the Galois case (see \cite{bib:chabertI}) since this is the easier case thanks to the following remark due to Ostrowski~\cite{bib:ostrowski}:
if the extension $K/\SQ$ is galoisian and if $p$ is not ramified in $K/\SQ$, then the ideals $\Pi_{p^f}(K)$ are principal. Thus, in the Galois case, we just have to consider the finitely many ramified primes. On the other hand, in the non-galoisian case, we have to consider {\em a priori} the ideals $\Pi_{p^f}(K)$ for all primes $p$.

Among the few known results in the non-galoisian case, we have that the Hilbert class field of every number field is a P\'olya field~\cite[Corollary 3.2]{bib:leriche3}. But the most important result about P\'olya fields that we want to translate into a result about P\'olya groups is the following one due to Zantema:

\begin{theorem}\cite[Thm 6.9]{bib:zantema}\label{th:69}
Let $K$ be a number field of degree $n\geq 3$ and let $G$ be the Galois group of the normal closure of $K$ over $\SQ$. If $G$ is either isomorphic to the symmetric group $\Ss_n$ where $n\not=4$, or to the alternating group $\Aa_n$ where $n\not=3, 5$, or is a Frobenius group, then the following assertions are equivalent:
\begin{enumerate}
\item all the ideals $\Pi_{p^f}(K)$ where $p$ is not ramified are principal,
\item $K$ is a P\'olya field,
\item $h_K=1$.
\end{enumerate}
\end{theorem}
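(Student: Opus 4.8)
The plan is to prove the cycle of implications $(3)\Rightarrow(2)\Rightarrow(1)\Rightarrow(3)$.
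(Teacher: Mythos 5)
Your proposal is only a skeleton, and the skeleton hides where all the work is. The implications $(3)\Rightarrow(2)\Rightarrow(1)$ are immediate from the definitions: if $h_K=1$ every ideal is principal, and if every $\Pi_q(K)$ is principal then in particular the $\Pi_{p^f}(K)$ with $p$ unramified are. Neither of these uses the hypothesis on the Galois group $G$ at all. The entire content of the theorem is the implication $(1)\Rightarrow(3)$, and for that you give no argument whatsoever. So as it stands there is a genuine gap: you have not indicated any mechanism by which principality of the unramified $\Pi_{p^f}(K)$ could force the whole class group to be trivial, nor where the hypothesis that $G\simeq \Ss_n$ ($n\neq 4$), $G\simeq\Aa_n$ ($n\neq 3,5$), or $G$ Frobenius would enter.

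To close the gap you need the following chain of ideas (this is essentially what the paper develops in order to prove the stronger Theorem~\ref{th:47x}, from which Theorem~\ref{th:69} follows). Via the Artin reciprocity isomorphism $\gamma_K:\Cc l(K)\to\Gal(H_K/K)=H/H_1$, one computes the Artin symbol of $\Pi_{p^f}(K)$ for an unramified $p$ in terms of the Frobenius $g$ of a prime of $N_H$ above $p$ acting on the coset space $\Omega=H\backslash G$: it is the class of $\prod_{\{i\mid f_i=f\}} s_i g^{f} s_i^{-1}$ modulo $H_1$ (Proposition~\ref{th:21a}). By Chebotarev, every element of the set $T$ of elements of $H$ fixing only the coset $H$ arises this way with $f=1$, so every class in $T\bmod H_1$ is the class of some $\Pi_p(K)$ (Lemma~\ref{th:61A}). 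Hence if $H=\langle T,H_1\rangle$, the classes of the unramified $\Pi_p(K)$ already generate all of $\Cc l(K)$, and assertion (1) forces $h_K=1$. Finally, the group-theoretic hypotheses on $G$ are exactly what guarantees $H=\langle T,H_1\rangle$: for $\Ss_n$ one exhibits an odd permutation of $\Ss_{n-1}$ without fixed points (which needs $n\neq 4$), for $\Aa_n$ one uses $2$-transitivity plus perfectness of $\Aa_{n-1}$ for $n\geq 6$ (and a separate check for $n=4$), and for a Frobenius group one has $T=H\setminus\{1\}$ by definition. Without this material, announcing the cycle $(3)\Rightarrow(2)\Rightarrow(1)\Rightarrow(3)$ proves nothing.
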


In other words, under the previous hypotheses, denoting by $\Pp o(K)_{nr}$ the subgroup of $\Pp o(K)$ generated by the classes of the ideals $\Pi_{p^f}(K)$ where $p$ is a not ramified, one has the equivalences:
$$\Pp o(K)_{nr}=\{1\}\Leftrightarrow \Pp o(K)=\{1\}\Leftrightarrow \Cc l(K)=\{1\}\,.$$
Note that $\Pp o(K)_{nr}$ is always trivial when $K/\SQ$ is galoisian. Motivated by what we did in the Galois case~\cite{bib:chabertI}, we formulate the following unreasonable conjecture:

\medskip

\noindent{\bf Conjecture}~\cite{bib:chabert2018}.
{\em Each of the hypotheses of Theorem~\ref{th:69} implies the following  equalities:}
$$\Pp o(K)_{nr}=\Pp o(K)=\Cc l(K)\,.$$
The aim of this paper is to give a proof of this conjecture (cf. Theorem~\ref{th:47x}). At least at the beginning of this proof, the ideas that we will  use follow closely that given by Zantema for Theorem~\ref{th:69}


\section{The Artin symbol $\left(\frac{H_K/K}{\Pi_{p^f}(K)}\right)$ for a non-ramified $p$}


\noindent{\bf Notations}. In the sequel, $K$ is a number field of degree $n\geq 3$. We denote by $\Oo_K$ its ring of integers, $\II_K$ the group of its nonzero fractionary ideals, $\Pp(K)$ the subgroup of nonzero fractionary principal ideals, $\Cc l(K)=\II_K/\Pp_K$ its class group, $h_K$ its class number, $\Pi_q(K)$ the product of its prime ideals with norm $q$. Let $H_K$ be the Hilbert class field of $K$, that is, the largest non-ramified abelian extension of $K$, and let $N_H$ be the normal closure of $H_K$ over $\SQ$. Finally, we denote the Galois groups in the following way:  $G=\Gal(N_H/\SQ),$ $H=\Gal(N_H/K),$ and $H_1=\Gal(N_H/H_K)$.

\medskip

There exists an isomorphism of groups  (see for instance~\cite[I \S 8]{bib:serre})
$$\gamma_K:\Cc l(K)\to \Gal(H_K/K)=H/H_1\,.$$
To study the group $\Pp o(K)$, and more specifically the subgroup $\Pp o(K)_{nr}$, we will use this isomorphism. Thus, let us describe $\gamma_K$.
Recall that, if $L/K$ is a galoisian extension and $\PP$ is a prime of $L$ non ramified in the extension $L/K$ and lying over $\pp=\PP\cap K$, the {\em Frobenius} of $\PP$ in the extension $L/K$ is the element $\sigma$ of $\Gal(L/K)$ characterized by:
$$ \forall a\in \Oo_L\quad\sigma(a)\equiv a^q\pmod{\PP}\quad \textrm{where } q=\textrm{Card}(\Oo_K/\pp) \,.$$ 
This Frobenius, that we denote by $(\PP,L/K)$, generates the decomposition group of $\PP$ in the extension $L/K$, thus its order is $f_{\PP}(L/K)=[\Oo_L/\PP:\Oo_K/\pp]$. 

If the extension $L/K$ is abelian, the Frobenius of $\PP$ depends only on $\pp=\PP\cap K$, it is called the {\em Artin symbol} of $\pp$ and is denoted by $\left(\frac{L/K}{\pp}\right)$. By Chebotarev's theorem, every element of $\Gal(L/K)$ is the Artin symbol of a non-ramified prime $\pp$. More generally, the Artin symbol of an ideal $\aa=\prod_i\pp_i^{\alpha_i}$ of $\Oo_K$ which is not contained in any ramified prime ideal of $\Oo_K$ is defined by linearity: $\left(\frac{L/K}{\aa}\right) = \prod_i\left(\frac{L/K}{\pp_i}\right)^{\alpha_i}$. 

In the particular case of the extension $H_K/K$, this surjective morphism of groups $\aa\in\II_K\mapsto\left(\frac{H_K/K}{\aa}\right)\in\Gal(H_K/K)$  induces the isomorphism $\gamma_K:$
$$\label{eq:13}  \gamma_K:\overline{\aa}\in\Cc l(K)\mapsto \left(\frac{H_K/K}{\aa}\right)\in \mathrm{Gal}(H_K/K)\,. $$

\smallskip

Let $\Omega$ be the set formed by the right cosets of $H$ in $G$. The group $G$ acts transitively on ${\Omega}$. As a permutation of ${\Omega}$, an element $g\in G$ is a product of $t$ disjoint cycles of orders $f_i$ ($1\leq i\leq t$) (fixed points correspond to cycles of order one). For each $i$, let $s_i\in G$ be such that $Hs_i\in{\Omega}$ belongs to the orbit of the $i$-th cycle. Thus, ${\Omega}=\{Hs_ig^{k}\mid 1\leq i\leq t, 0\leq k<f_i\}$

\begin{proposition}\cite[Prop. 6.2]{bib:zantema}\label{th:21a}
Assume that $p$  is not ramified in the extension $K/\SQ$. Let $\PP$ be a prime ideal of $N_H$ lying over $p$ and consider its Frobenius $g=(\PP,N_H/\SQ)$. With the previous notation ($t$, $f_i$ and $s_i$ for $g$), one has:
\be\label{eq:10}\left(\frac{H_K/K}{\Pi_{p^f}(K)}\right)=\prod_{\{i \mid f_i=f\}} s_i\,g^f\, s_i^{-1} \,\vert_{H_K}\,.\ee	
\end{proposition}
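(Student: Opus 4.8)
The plan is to reduce the identity to the classical correspondence between the cycle decomposition of a Frobenius acting on the coset space $\Omega=H\backslash G$ and the factorization of $p$ in $\Oo_K$, and then to evaluate each local Artin symbol by the conjugation and restriction functoriality of the Frobenius. First I would check that the Frobenius $g=(\PP,N_H/\SQ)$ is actually defined, i.e. that $p$ is unramified in all of $N_H$ and not merely in $K$. Since $H_K/K$ is unramified and $K/\SQ$ is unramified at $p$, the extension $H_K/\SQ$ is unramified at $p$; as $N_H$ is the compositum of the $\SQ$-conjugates of $H_K$, each isomorphic to $H_K$ over $\SQ$ and hence unramified at $p$, the field $N_H$ is unramified at $p$ as well. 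Therefore the inertia group of $\PP$ is trivial and its decomposition group $D$ in $N_H/\SQ$ equals the cyclic group $\langle g\rangle$.

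The heart of the argument is the prime–cycle dictionary. Using the bijection $\sigma D\mapsto\sigma(\PP)$ between the primes of $N_H$ above $p$ and the left cosets $G/D$, the $H$-orbits of such primes, i.e. the primes of $K$ above $p$, correspond to the double cosets $H\backslash G/\langle g\rangle$, which are exactly the cycles of $g$ on $\Omega$. To the $i$-th cycle, with representative $Hs_i$, I attach the prime $\PP_i:=s_i(\PP)$ of $N_H$ and the prime $\pp_i:=\PP_i\cap K$ of $K$. The residue degree of $\pp_i$ equals the cycle length $f_i$: indeed $f_i$ is the least $m>0$ with $Hs_ig^m=Hs_i$, that is with $s_ig^ms_i^{-1}\in H$, and this $m$ is the index $[\langle g\rangle:\langle g\rangle\cap s_i^{-1}Hs_i]=[\Oo_K/\pp_i:\SF_p]$. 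Thus $\Card(\Oo_K/\pp_i)=p^{f_i}$, and the primes of $K$ of norm $p^f$—which necessarily lie above $p$—are precisely the $\pp_i$ with $f_i=f$, so that
\be\label{eq:fact}\Pi_{p^f}(K)=\prod_{\{i\mid f_i=f\}}\pp_i\,.\ee

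Next I would compute the Artin symbol $\left(\frac{H_K/K}{\pp_i}\right)$ for a fixed $i$ with $f_i=f$. By the conjugation rule for the Frobenius, $(\PP_i,N_H/\SQ)=s_igs_i^{-1}$. Since the Frobenius over $K$ acts on residues by the $\Card(\Oo_K/\pp_i)$-power map while the Frobenius over $\SQ$ acts by the $p$-power map, one has $(\PP_i,N_H/K)=(\PP_i,N_H/\SQ)^{f_i}=s_ig^{f_i}s_i^{-1}$, an element of $H=\Gal(N_H/K)$, as the relation $s_ig^{f_i}s_i^{-1}\in H$ already shows. Finally, by functoriality of the Frobenius along the tower $K\subseteq H_K\subseteq N_H$ and because $H_K/K$ is abelian, restricting this element to $H_K$ produces the Artin symbol itself: $\left(\frac{H_K/K}{\pp_i}\right)=s_ig^{f_i}s_i^{-1}\vert_{H_K}$.

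Combining this with the factorization \eqref{eq:fact} and the multiplicativity of the Artin symbol gives
\be\left(\frac{H_K/K}{\Pi_{p^f}(K)}\right)=\prod_{\{i\mid f_i=f\}}\left(\frac{H_K/K}{\pp_i}\right)=\prod_{\{i\mid f_i=f\}}s_ig^fs_i^{-1}\vert_{H_K}\,,\ee
which is the claimed formula. I expect the main obstacle to be the prime–cycle dictionary: setting up the coset–prime correspondence with the correct left–right conventions and rigorously matching each cycle length $f_i$ with the residue degree $f(\pp_i\mid p)$. Once this \emph{cycle type of Frobenius equals splitting type} correspondence is secured, the remaining steps are routine applications of the conjugation and restriction properties of the Frobenius.
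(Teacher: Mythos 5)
Your proof is correct and follows essentially the same route as the paper: reduce to the primes $s_i(\PP)\cap K$ of norm $p^f$, then evaluate each Artin symbol via the conjugation rule $(s_i(\PP),N_H/\SQ)=s_igs_i^{-1}$, the power rule $(s_i(\PP),N_H/K)=(s_i(\PP),N_H/\SQ)^{f_i}$, and restriction to $H_K$. The only difference is that you prove the prime--cycle dictionary (the correspondence between cycles of $g$ on $\Omega$ and the primes of $K$ above $p$ with their residue degrees) directly via double cosets, whereas the paper simply cites it from Janusz as Lemma~\ref{th:42a}.
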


\begin{proof}
Note that the fact that $p$ is not ramified in $K/\SQ$ implies that $\PP$ is not ramified in $N_H/\SQ$. By definition,
$$ \left(\frac{H_K/K}{\Pi_{p^f}(K)}\right)= \prod_{\pp\in\Max(\Oo_K),\, N(\pp)=p^f}\left(\frac{H_K/K}{\pp}\right)\,. $$ 
By Lemma~\ref{th:42a} below,
$$\label{eq:8} \left(\frac{H_K/K}{\Pi_{p^f}(K)}\right)= \prod_{\{i \mid f_i=f\}}\left(\frac{H_K/K}{s_i(\PP)\cap K}\right)\,. $$ 
An Artin symbol is a Frobenius:
$$\left(\frac{H_K/K}{s_i(\PP)\cap K}\right)= (s_i(\PP)\cap H_K,H_K/K) =(s_i(\PP),N_H/K)_{\vert H_K}\,.$$
It follows from the properties of the Frobenius that 
$$(s_i(\PP),N_H/K)=(s_i(\PP),N_H/\SQ)^{f_i}=\left(s_i\,(\PP,N_H/\SQ)\, s_i^{-1}\right)^{f_i}=s_i\,g^{f_i}\,s_i^{-1}\,.$$
Consequently,

$\quad\quad\quad\quad\quad\quad\quad\quad\quad\quad\quad \left(\frac{H_K/K}{s_i(\PP)\cap K}\right) = s_i\,g^{f_i}\, s_i^{-1} \vert_{H_K}\,.$
\end{proof}

\begin{remarks}\label{rem:22r}
With the hypotheses and notation of Proposition~\ref{th:21a}. 

(i) Note that, in Formula~(\ref{eq:10}),

 -- for every $i$, $s_ig^{f_i}s_i^{-1}\in H$, 

-- to consider the restriction to $H_K$ is equivalent to consider the class modulo~$H_1$,

-- as the group $H/H_1$ is abelian, the product in (\ref{eq:10}) does not depend on the order.

(ii) By Formula (\ref{eq:10}) itself, the class mod $H_1$ of the product $\prod_{\{i \mid f_i=f\}} s_i\,g^f\, s_i^{-1}$ does not depend on the choice of the $s_i$'s~\cite[p. 176]{bib:zantema}.

(iii) It follows from the equality $p\Oo_K=\prod_{1\leq i\leq t}\Pi_{p^{f_i}}(K)$ 	that: $$1=\gamma_K(\overline{p\Oo_K})=\left(\frac{H_K/K}{p\Oo_K}\right)=\prod_{1\leq i\leq t}s_i\,g^f s_i^{-1} \,\vert_{H_K}\,,$$
which means that \cite[Prop. 6.2]{bib:zantema}
$$\prod_{1\leq i\leq t}s_i\,g^f s_i^{-1} \in H_1\,.$$
\end{remarks}

\begin{lemma}\cite[III, Prop. 2.8]{bib:janusz}\label{th:42a}
With the hypotheses and notation of Proposition~\ref{th:21a}:
\begin{enumerate}
\item the prime ideals of $K$ lying over $p$ are the $t$ ideals $s_i(\PP)\cap K\;(1\leq i\leq t)$,
\item the residual degree of $s_i(\PP)\cap K$ in the extension $K/\SQ$ is $f_i$.
\end{enumerate}
\end{lemma}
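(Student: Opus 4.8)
The plan is to realise the primes of $K$ above $p$ as the orbits of a decomposition group acting on the primes of $N_H$ above $p$, and then to translate this group-theoretically into the cycle structure of $g$ on $\Omega$. Since $p$ is not ramified, the decomposition group $D=\{\sigma\in G\mid\sigma(\PP)=\PP\}$ of $\PP$ over $p$ is cyclic, generated by the Frobenius $g=(\PP,N_H/\SQ)$, so $D=\langle g\rangle$. The primes of $N_H$ above $p$ are exactly the conjugates $\sigma(\PP)$ with $\sigma\in G$, and $\sigma_1(\PP)=\sigma_2(\PP)$ if and only if $\sigma_2^{-1}\sigma_1\in D$, i.e.\ $\sigma_1D=\sigma_2D$; hence $\sigma D\mapsto\sigma(\PP)$ is a bijection from the left cosets $G/D$ onto these primes.

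Next I would use that $N_H/K$ is galoisian, so $H=\Gal(N_H/K)$ acts transitively on the primes of $N_H$ above any fixed prime of $K$. Consequently the primes of $K$ above $p$ are in bijection with the $H$-orbits on $G/D$, that is, with the double cosets $H\backslash G/D$, via $HsD\mapsto s(\PP)\cap K$. This map is well defined because replacing $s$ by $hsd$ ($h\in H$, $d\in D$) replaces $s(\PP)$ by the $H$-conjugate $h\bigl(s(\PP)\bigr)$, which has the same trace on $K$ since $h$ fixes $\Oo_K$ pointwise; and it is injective by the transitivity of $H$ on the primes above a given $\qq$. It remains to match these double cosets with the cycles of $g$: because $D=\langle g\rangle$, the right cosets contained in $Hs_iD$ are precisely $\{Hs_ig^{k}\mid k\geq 0\}$, which is exactly the orbit of $Hs_i$ under right multiplication by $g$, i.e.\ the $i$-th cycle. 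Thus the $t$ cycles correspond bijectively to the $t$ double cosets, hence to $t$ distinct primes $s_i(\PP)\cap K$, and these exhaust the primes above $p$; this yields~(1).

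For~(2), set $\qq_i=s_i(\PP)\cap K$ and $\PP_i=s_i(\PP)$. Conjugating the Frobenius gives $(\PP_i,N_H/\SQ)=s_i\,g\,s_i^{-1}$, and the standard relation between the Frobenius in $N_H/\SQ$ and in $N_H/K$ gives $(\PP_i,N_H/K)=(\PP_i,N_H/\SQ)^{f(\qq_i/p)}=s_i\,g^{\,f(\qq_i/p)}\,s_i^{-1}$, where $f(\qq_i/p)$ is the residual degree to be identified. Since $(\PP_i,N_H/K)$ generates the decomposition group $\langle s_i g s_i^{-1}\rangle\cap H$ of $\PP_i$ in $N_H/K$, and this cyclic group is generated by the least power of $s_i g s_i^{-1}$ lying in $H$, the integer $f(\qq_i/p)$ is precisely the smallest positive $m$ with $s_i\,g^{m}\,s_i^{-1}\in H$. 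But $Hs_ig^{m}=Hs_i$ is equivalent to $s_i g^{m} s_i^{-1}\in H$, so this least $m$ is exactly the length $f_i$ of the $i$-th cycle; hence $f(\qq_i/p)=f_i$.

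I expect the only delicate point to be the bookkeeping of left versus right cosets: one must verify that the orbit of $Hs_i$ under right multiplication by $g$ genuinely matches the $H$-orbit of $s_i(\PP)$ through the chain right cosets $\to$ double cosets $\to$ left cosets $\to$ primes, and that the representatives $s_i$ remain compatible across the two descriptions. Everything else reduces to the multiplicativity $f(\PP_i/p)=f(\PP_i/\qq_i)\,f(\qq_i/p)$ together with the behaviour of the Frobenius under conjugation and under change of base field. As a sanity check one has $\sum_i f_i=[G:H]=[K:\SQ]=n$, matching $\sum_i f(\qq_i/p)=n$ for an unramified $p$.
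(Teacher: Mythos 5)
Your proof is correct: the paper gives no argument for this lemma, citing only Janusz [III, Prop.\ 2.8], and what you have written is precisely the standard double-coset proof found there (primes of $K$ over $p$ $\leftrightarrow$ $H\backslash G/\langle g\rangle$, with the residual degree read off as the least $m$ with $s_ig^ms_i^{-1}\in H$, i.e.\ the cycle length $f_i$). The left/right coset bookkeeping and the identification of $f(\qq_i/p)$ via the generator of $\langle s_igs_i^{-1}\rangle\cap H$ are both handled correctly, so nothing needs to be added.
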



\section{The set $T$ formed by the elements of $H$ with one fixed point}

Let $T$ be the set formed by the elements of $H$ with only one fixed point, namely the class $H$, in their action on $\Omega:$
$$T=\{h\in H\mid \forall s\in G\setminus H\;\;Hsh\not=Hs\, \}\,. $$

\begin{lemma}\label{th:61A} In the group $H/H_1$, one has the containment:
$$T\;\mathrm{mod}\; H_1\;\subseteq \;\left\{\left(\,\frac{H_K/K}{\Pi_p(K)}\right)\,\Big\vert\, p\in\SP \textrm{ non-ramified in } K\,\right\}\,.$$
\end{lemma}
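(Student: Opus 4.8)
The plan is to show that every $h\in T$ is realized, modulo $H_1$, as the Artin symbol $\left(\frac{H_K/K}{\Pi_p(K)}\right)$ for a suitable prime $p$ non-ramified in $K/\SQ$. The engine driving this is Chebotarev's density theorem, which guarantees that every conjugacy class of $G=\Gal(N_H/\SQ)$ occurs as the Frobenius class of some rational prime $p$ unramified in $N_H/\SQ$, and hence, since $K\subseteq N_H$, a fortiori unramified in $K/\SQ$.

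Fix $h\in T$. First I would invoke Chebotarev to produce a non-ramified prime $p$ whose Frobenius conjugacy class in $G$ is the class of $h$; then, by choosing the prime $\PP$ of $N_H$ above $p$ appropriately within the fibre over $p$, I may arrange that the Frobenius $g=(\PP,N_H/\SQ)$ equals $h$ on the nose. The purpose of taking $g=h\in T$ is to pin down the cycle structure of $g$ acting on $\Omega$: by the very definition of $T$, the permutation $h$ of $\Omega$ fixes only the coset $H=H\cdot 1$, while all of its remaining orbits have length at least $2$. Thus, in the notation $t$, $f_i$, $s_i$ attached to $g$ in Proposition~\ref{th:21a}, there is exactly one index $i$ with $f_i=1$, corresponding to the fixed coset $H$, and for this index one may take $s_i=1$.

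It then remains to specialize Formula~(\ref{eq:10}) to the case $f=1$, that is, to $\Pi_p(K)$. The product $\prod_{\{i\mid f_i=1\}} s_i\,g\,s_i^{-1}\vert_{H_K}$ collapses to a single factor, namely $1\cdot h\cdot 1\,\vert_{H_K}=h\vert_{H_K}$; and, as recorded in Remark~\ref{rem:22r}(i), restriction to $H_K$ is precisely reduction modulo $H_1$. Hence $\left(\frac{H_K/K}{\Pi_p(K)}\right)=h \bmod H_1$, which is the asserted membership. Since $h\in T$ was arbitrary, the containment of Lemma~\ref{th:61A} follows.

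The step I expect to demand the most care is the passage from Chebotarev, which only controls the Frobenius up to conjugacy, to the exact equality $g=h$, together with the observation that the representative $s_i$ of the unique fixed orbit may legitimately be chosen to be the identity. These two points are exactly what ensure that the one surviving term of the product equals $h$ itself rather than some conjugate of it; once they are secured, the collapse of Formula~(\ref{eq:10}) is automatic. It is worth emphasizing that the definition of $T$ as the set of elements of $H$ fixing only the coset $H$ is tailored precisely to force this collapse and thereby to isolate $\Pi_p(K)$ among the ideals $\Pi_{p^f}(K)$.
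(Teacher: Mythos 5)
Your proposal is correct and follows essentially the same route as the paper's own proof: Chebotarev produces a non-ramified $p$ with a prime $\PP$ of $N_H$ whose Frobenius is exactly $h$, the definition of $T$ forces a unique index $i$ with $f_i=1$ for which one may take $s_i=1$, and Formula~(\ref{eq:10}) then collapses to $h \bmod H_1$. Your extra care about passing from the Frobenius conjugacy class to the exact element $h$, and about unramifiedness in $N_H$ implying unramifiedness in $K$, only makes explicit what the paper leaves implicit.
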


\begin{proof}
Let $g\in T\subseteq H$. By Cheborev's density theorem, $g$ is the Frobenius of a prime $\PP$ of $N_H$ lying over a prime number $p$ which is not ramified in $K$. Following formula (\ref{eq:10}), $\left(\,\frac{H_K/K}{\Pi_p(K)}\right)=\prod_{\{i\mid f_i=1\}}s_igs_i^{-1} \textrm{ mod } H_1$. As $H$ is the unique fixed point of $g$ in its action on $\Omega$, there is exactly one $f_i$ which is equal to 1 and we may choose $s_i=1$. Consequently, $\left(\,\frac{H_K/K}{\Pi_p(K)}\right)=g \textrm{ mod } H_1$.
\end{proof}

Recall that
$$\Pp o(K)=\langle\,\overline{\Pi_q(K)}\mid q\in\SN^*\,\rangle$$
and
$$\Pp o(K)_{nr}=\langle\,\overline{\Pi_{p^f}(K)}\mid f\in\SN^*, p\in\SP \textrm{ non-ramified in } K\,\rangle\,.$$
Let also:
$$\Pp o(K)_{nr1}=\langle\,\overline{\Pi_{p}(K)}\mid p\in\SP \textrm{ non-ramified in } K\,\rangle\,.$$

\begin{proposition}\label{th:66A}
If the following equality holds
$$H=\;\langle\, T,H_1 \,\rangle\,,$$
then 
$$ \Cc l(K)=\Pp o(K) =\Pp o(K)_{nr}= \Pp o(K)_{nr1}\,.$$
\end{proposition}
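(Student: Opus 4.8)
The plan is to prove the single reverse inclusion $\Cc l(K)\subseteq \Pp o(K)_{nr1}$ and then invoke the evident chain $\Pp o(K)_{nr1}\subseteq \Pp o(K)_{nr}\subseteq \Pp o(K)\subseteq \Cc l(K)$, which holds by definition because each group is generated by a subset of the generators of the next. Together these force all four groups to coincide, so no separate argument for the intermediate equalities is needed.

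To obtain the reverse inclusion I would transport the question through the isomorphism $\gamma_K:\Cc l(K)\to H/H_1$. Since $\gamma_K(\overline{\Pi_p(K)})=\left(\frac{H_K/K}{\Pi_p(K)}\right)$, the image $\gamma_K(\Pp o(K)_{nr1})$ is exactly the subgroup of $H/H_1$ generated by the Artin symbols $\left(\frac{H_K/K}{\Pi_p(K)}\right)$ with $p$ non-ramified in $K$. Because $\gamma_K$ is an isomorphism, it suffices to show that these symbols generate all of $H/H_1$.

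This is where Lemma~\ref{th:61A} carries the weight: it provides the containment $T\bmod H_1\subseteq\{\left(\frac{H_K/K}{\Pi_p(K)}\right)\mid p\textrm{ non-ramified}\}$. Hence each element of $T\bmod H_1$ lies in $\gamma_K(\Pp o(K)_{nr1})$, and therefore $\langle T\bmod H_1\rangle\subseteq\gamma_K(\Pp o(K)_{nr1})$. On the other hand, reducing the hypothesis $H=\langle T,H_1\rangle$ modulo $H_1$ gives $H/H_1=\langle T\bmod H_1\rangle$, since the image of $H_1$ in the quotient is trivial. Stringing these together yields $H/H_1=\langle T\bmod H_1\rangle\subseteq\gamma_K(\Pp o(K)_{nr1})\subseteq H/H_1$, so $\gamma_K(\Pp o(K)_{nr1})=H/H_1$ and, transporting back by $\gamma_K$, $\Pp o(K)_{nr1}=\Cc l(K)$.

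The argument is essentially formal once Lemma~\ref{th:61A} is available, so I do not anticipate a serious obstacle at the level of this proposition itself. The only points deserving attention are the clean passage of the group-theoretic hypothesis to the abelian quotient $H/H_1$ and the observation that the relevant Artin symbols are literally among the chosen generators of $\Pp o(K)_{nr1}$. The genuine difficulty of the program is pushed upstream: into establishing Lemma~\ref{th:61A}, and ultimately into verifying the hypothesis $H=\langle T,H_1\rangle$ for the groups $G$ arising in Theorem~\ref{th:69}.
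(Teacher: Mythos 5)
Your proposal is correct and is essentially the paper's own proof: both reduce everything to the single inclusion $\Cc l(K)\subseteq \Pp o(K)_{nr1}$ via the trivial chain of containments, then transport through the Artin isomorphism $\gamma_K$ and apply Lemma~\ref{th:61A} to see that $\langle T \bmod H_1\rangle$ lands inside $\gamma_K(\Pp o(K)_{nr1})$. The only cosmetic difference is that the paper phrases the last step with $\gamma_K^{-1}$ rather than $\gamma_K$.
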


\begin{proof}
We have to prove the containment  $\Cc l(K)\subseteq \Pp o(K)_{nr1}$\,. The hypothesis means that $ H/H_1= \;\langle \,T \;\mathrm{mod}\, H_1 \,\rangle$. 
Applying the isomorphism $\gamma^{-1}_K$ to both sides, the image of the left hand side is
$ \gamma_K^{-1}(H/H_1)=\Cc l(K)$
while, by lemma~\ref{th:61A}, the image of the right hand side $\gamma^{-1}_K(\,\langle T \;\mathrm{mod}\,H_1\,\rangle)$ is contained in $\Pp o(K)_{nr1}\,.$
\end{proof}

Thus, we obtain the conclusion of the conjecture, but how can we know that the hypothesis is satisfied? In some cases, in particular if the hypotheses of Theorem~\ref{th:69} are satisfied, we may replace the normal closure of $H_K$ by those of $K$.

\medskip

Let $N_K$ be the normal closure of $K$ over $\SQ$. We let $G_0=\Gal(N_K/\SQ)$,  $H_0=\Gal(N_K/K),$ and $H_2=\Gal(N_H/N_K)$. 
Of course, $H_2$ is the largest normal subgroup of $G$ contained in $H$. Thus, 
$$ H_2=\cap_{g\in G}\;gHg^{-1}\,.$$ 

\medskip

\begin{tikzpicture}

\node[draw] (Q) at (5,0) {$\SQ$};
\node[draw] (K) at (5,1.5) {$K$};
\node[draw] (HK) at (5,3) {$H_K$};
\node[draw] (NH) at (5,4.5) {$N_H$};
\node[draw] (NK) at (7,3) {$N_K$};

\node (H1) at (5.25,3.75) {$H_1$};
\node (HH1) at (4.6,2.25) {$H/H_1$};
\node (H) at (3.3,3) {$H$};
\node (G) at (2.5,2.1) {$G$};
\node (H_2) at (6.3,3.75) {$H_2$};
\node (G_0) at (6.3,1.5) {$G_0$};
\node (H_0) at (5.9,2.4) {$H_0$};

\node (a) at (10,2.25) {$H_0=H/H_2$};
\node (b) at (10,3.25) {$G_0=G/H_2$};

\draw (Q) edge[] (K);
\draw (K) edge[] (HK);
\draw (HK) edge[] (NH);
\draw (NK) edge[] (NH);
\draw (NK) edge[] (K);
\draw (NK) edge[] (Q);

\draw (5,4.5) arc (90:270:2.25);
\draw (5,4.5) arc (90:270:1.5);

\end{tikzpicture}

\medskip

Analogously to that we considered with respect to $G$ and $H$, we introduce the set $\Omega_0$ formed by the right cosets of $H_0$ in $G_0$, and the set $T_0$ formed by the elements of $H_0$  with only one fixed point in their action on $\Omega_0$. Moreover, we consider the canonical surjection 
$\pi:G\to G/H_2=G_0.$

\begin{lemma}\label{th:719}
For every $h\in H$, $h\in T$ if and only if $\pi(h)\in T_0$. Moreover, if $T\not=\emptyset$, then $H_2\subseteq  \langle \,T\,\rangle \,.$ \end{lemma}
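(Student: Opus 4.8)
The plan is to handle the two assertions separately, translating everything into the combinatorics of the two coset spaces $\Omega=H\backslash G$ and $\Omega_0=H_0\backslash G_0$ under the right-multiplication action.

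For the equivalence $h\in T\iff\pi(h)\in T_0$, the first step is to check that $\pi$ induces a bijection $\varphi:\Omega\to\Omega_0$, $Hs\mapsto H_0\pi(s)$. This is well defined and injective precisely because $H_2\subseteq H$ forces $\pi^{-1}(H_0)=HH_2=H$, so that $Hs_1=Hs_2$ holds exactly when $H_0\pi(s_1)=H_0\pi(s_2)$; surjectivity is clear since $\pi$ is onto. Next I would observe that $\varphi$ is equivariant: for $h\in H$ one computes $\varphi(Hsh)=H_0\pi(s)\pi(h)$, which is the image of $H_0\pi(s)$ under the action of $\pi(h)\in H_0$. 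Hence $Hs$ is fixed by $h$ if and only if $H_0\pi(s)$ is fixed by $\pi(h)$, so $h$ and $\pi(h)$ have the same number of fixed points, and the unique fixed point $H$ corresponds to the unique fixed point $H_0=\varphi(H)$. This gives the first claim.

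The crux of the second assertion is the observation that $H_2$ acts \emph{trivially} on $\Omega$: since $H_2$ is normal in $G$ and contained in $H$, for every $s\in G$ and $t\in H_2$ we have $sts^{-1}\in H_2\subseteq H$, hence $Hst=Hs$. Consequently, for any $h\in H$ and $t\in H_2$ the permutations of $\Omega$ induced by $h$ and by $th$ coincide, because $Hs(th)=(Hst)h=Hsh$ for every coset $Hs$. In particular $h$ and $th$ have exactly the same fixed points, so $h\in T$ implies $th\in T$.

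Finally, assuming $T\neq\emptyset$, I would pick any $h_0\in T$. By the previous step $th_0\in T$ for every $t\in H_2$, so both $th_0$ and $h_0$ lie in $\langle T\rangle$, whence $t=(th_0)h_0^{-1}\in\langle T\rangle$. As $t$ is arbitrary, $H_2\subseteq\langle T\rangle$. I expect the only delicate point to be the consistent bookkeeping of the right-action conventions on cosets; once the triviality of the $H_2$-action is isolated, both parts follow with no genuine computation.
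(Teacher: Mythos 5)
Your proof is correct and follows essentially the same route as the paper: both arguments rest on the facts that $\pi^{-1}(H_0)=H$ and that $H_2$, being normal in $G$ and contained in $H$, acts trivially on $\Omega$, and both conclude with the same telescoping $t=(th_0)h_0^{-1}$. The only cosmetic difference is that you package the first equivalence as an equivariant bijection $\Omega\to\Omega_0$, whereas the paper runs the element-wise chain $Hsh=Hs\iff shs^{-1}\in H\iff \pi(s)\pi(h)\pi(s)^{-1}\in H_0$, and then deduces $th_2\in T$ from that equivalence rather than from the trivial action directly.
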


\begin{proof}
Let $h\in H$. Then, $h\notin T $
$\Leftrightarrow \exists s\in G\setminus H$ s.t. $Hsh^{-1}=Hs$
$\Leftrightarrow\exists s\in G\setminus H$ s.t. $shs^{-1}\in H $
$\Leftrightarrow \exists s_0\in G_0\setminus H_0$ s.t.  $s_0\pi(h)s_0^{-1}\in H_0 $
$\Leftrightarrow \pi(h)\notin T_0$.

Assume now that $T\not=\emptyset$ and fix some $t\in T$. Consider $h_2\in H_2$. 
As $t\in T$, one has $\pi(th_2)=\pi(t)\in T_0$, and hence, $th_2\in T$. Finally, $h_2=t^{-1}\times th_2\in\langle\,T\,\rangle$.
\end{proof}

Denoting by $H_0'$ the derived subgroup of $H_0$, we have:

\begin{lemma}\label{th:72A}
If $T_0\not=\emptyset$ and $H_0=\;\langle\, T_0,H'_0\,\rangle $\,, then $H=\;\langle \, T,H_1\, \rangle $\,.	
\end{lemma}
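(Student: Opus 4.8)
The plan is to prove the equality at the level of the quotient $G_0=G/H_2$, where the hypothesis is formulated, and then to lift the resulting identity back up to $H$ using the fact, already recorded in Lemma~\ref{th:719}, that $H_2$ is absorbed by $\langle T\rangle$. First I would note that $T_0\neq\emptyset$ forces $T\neq\emptyset$: since $\pi$ maps $H$ onto $H_0$, any $t_0\in T_0$ equals $\pi(h)$ for some $h\in H$, and by the equivalence of Lemma~\ref{th:719} such an $h$ lies in $T$; read surjectively, the same equivalence gives $\pi(T)=T_0$. In particular the second assertion of Lemma~\ref{th:719} applies and yields $H_2\subseteq\langle T\rangle$.

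The key structural input is that $H/H_1$ is abelian. Indeed $H_1=\Gal(N_H/H_K)$ and $H/H_1\cong\Gal(H_K/K)\cong\Cc l(K)$, which is abelian; hence the derived subgroup $H'$ is contained in $H_1$. Applying $\pi$, and using that a surjective homomorphism carries the derived subgroup onto the derived subgroup of the image, I obtain $H_0'=\pi(H')\subseteq\pi(H_1)$.

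With these two facts in hand, I would compute the image of $\langle T,H_1\rangle$ under $\pi$:
$$\pi(\langle T,H_1\rangle)=\langle\pi(T),\pi(H_1)\rangle=\langle T_0,\pi(H_1)\rangle\supseteq\langle T_0,H_0'\rangle=H_0,$$
the final equality being precisely the hypothesis. As the reverse inclusion $\pi(\langle T,H_1\rangle)\subseteq\pi(H)=H_0$ is automatic, we get $\pi(\langle T,H_1\rangle)=H_0$, that is, $\langle T,H_1\rangle\,H_2=\pi^{-1}(H_0)=H$ (recall $H\supseteq H_2$). Finally, since $H_2\subseteq\langle T\rangle\subseteq\langle T,H_1\rangle$, the factor $H_2$ is swallowed, so $\langle T,H_1\rangle\,H_2=\langle T,H_1\rangle$, and therefore $H=\langle T,H_1\rangle$.

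The main point requiring care—rather than a genuine obstacle—is the inclusion $H_0'\subseteq\pi(H_1)$: it is what converts the abelian-quotient hypothesis $H_0=\langle T_0,H_0'\rangle$, stated in terms of $H_0'$, into the statement about $\pi(H_1)$ that the argument actually needs, and it rests essentially on $H_K/K$ being abelian. Everything else is a routine diagram chase, made possible by the observation that $H_2\subseteq\langle T\rangle$ permits passage back and forth between $H$ and $G_0$ without loss.
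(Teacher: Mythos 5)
Your proof is correct and follows essentially the same route as the paper's: both arguments rest on the three facts $\pi(T)=T_0$, $H_2\subseteq\langle T\rangle$ (from Lemma~\ref{th:719}), and $H'\subseteq H_1$ (since $H/H_1$ is abelian), and both conclude by computing an image under $\pi$ and descending via the absorption of $H_2$. The only cosmetic difference is that the paper first establishes $H=\langle T,H'\rangle$ and then enlarges $H'$ to $H_1$, whereas you work with $\langle T,H_1\rangle$ directly using $H_0'\subseteq\pi(H_1)$.
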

\begin{proof}
It follows from Lemma~\ref{th:719} that $T_0\not=\emptyset$ implies that $T\not=\emptyset$, and hence, that $H_2\subseteq \langle\, T\, \rangle\,$. Clearly, $\pi(H')=H'_0$. Consequently,
$$\pi(\langle\,T,H'\,\rangle)=\langle\,\pi(T),\pi(H')\,\rangle =\langle\, T_0,H_0'\,\rangle =H_0\,.$$
Since $H_2\subseteq\langle\,T\,\rangle$, one has $H=\langle\,T,H'\,\rangle\,,$ and since $H/H_1$ is abelian, one has $H'\subseteq H_1$. Finally, 
$$H=\langle\,T,H'\,\rangle\subseteq \langle \, T,H_1\, \rangle\subseteq H\,.$$
\end{proof}


\section{Conclusion}

\begin{theorem}\label{th:720}
Let $K$ be a number field of degree $n\geq 3$. Let $N_K$ be the normal closure of $K$ over $\SQ$. Let $G=\Gal(N_K/\SQ)$ and $H=\Gal(N_K/K)$. Let ${\Omega}$ be the set formed by the right cosets of $H$ in $G$ and let $T$ be the set formed by the elements of $H$ which, in their action on ${\Omega}$, have only $H$ as fixed point.
Assume that both following conditions are satisfied:
\be\label{eq:2B} T \not=\emptyset \quad\textrm{ and } \quad H=\;\langle\, T, H'\,\rangle \,.\ee
Then, the class group $\Cc l(K)$ of $K$ is generated by the classes of the ideals $\Pi_p(K)$ where $p\in\SP$ is not ramified in $K.$	
\end{theorem}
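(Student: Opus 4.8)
The plan is to observe that the quadruple $(G,H,\Omega,T)$ appearing in the statement is exactly the data attached to the normal closure $N_K$ of $K$, which in the previous section was denoted $(G_0,H_0,\Omega_0,T_0)$. Under this identification the hypothesis (\ref{eq:2B}) reads $T_0\neq\emptyset$ and $H_0=\langle T_0,H_0'\rangle$, which is precisely the hypothesis of Lemma~\ref{th:72A}. Thus the whole argument reduces to chaining together two results already established, and the only real care needed is the bookkeeping of the two coexisting systems of notation (one attached to $N_K$, one attached to the normal closure $N_H$ of the Hilbert class field $H_K$).

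First I would apply Lemma~\ref{th:72A}. Its input is precisely $T_0\neq\emptyset$ together with $H_0=\langle T_0,H_0'\rangle$, and its output is the equality $H=\langle T,H_1\rangle$, where now $H=\Gal(N_H/K)$, $H_1=\Gal(N_H/H_K)$, and $T$ is the set of elements of $H$ fixing only the coset $H$ in the action on the right cosets of $H$ in $\Gal(N_H/\SQ)$. This is the step in which the passage from the smaller closure $N_K$ to the larger closure $N_H$ takes place; internally it rests on Lemma~\ref{th:719} (so that $H_2\subseteq\langle T\rangle$), on the identity $\pi(H')=H_0'$, and on the fact that $H/H_1$ is abelian, which forces $H'\subseteq H_1$.

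Next I would feed the conclusion $H=\langle T,H_1\rangle$ into Proposition~\ref{th:66A}. Its hypothesis is now satisfied verbatim, so its conclusion yields the chain of equalities $\Cc l(K)=\Pp o(K)=\Pp o(K)_{nr}=\Pp o(K)_{nr1}$. Reading off the definition $\Pp o(K)_{nr1}=\langle\,\overline{\Pi_p(K)}\mid p\in\SP\textrm{ non-ramified in }K\,\rangle$, the equality $\Cc l(K)=\Pp o(K)_{nr1}$ says exactly that $\Cc l(K)$ is generated by the classes of the ideals $\Pi_p(K)$ with $p$ unramified, which is the assertion of the theorem.

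I expect no genuine mathematical obstacle here, since all of the substantive work has already been carried out in Lemma~\ref{th:72A} and Proposition~\ref{th:66A} (and, behind them, in Lemma~\ref{th:61A} and Proposition~\ref{th:21a}). The one point that must be handled carefully is the deliberate reuse of the symbols $G,H,\Omega,T$: in the theorem they denote the objects built from $N_K$, whereas in Proposition~\ref{th:66A} the same letters denote the objects built from $N_H$. Once the dictionary $(G,H,\Omega,T)\leftrightarrow(G_0,H_0,\Omega_0,T_0)$ is made explicit, the proof is simply the composition of Lemma~\ref{th:72A} with Proposition~\ref{th:66A}, with no further computation required.
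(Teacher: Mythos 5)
Your proposal is correct and is exactly the paper's argument: the paper proves Theorem~\ref{th:720} by citing Lemma~\ref{th:72A} and Proposition~\ref{th:66A} in precisely the chain you describe, and your careful identification of the theorem's $(G,H,\Omega,T)$ with the $(G_0,H_0,\Omega_0,T_0)$ of the preceding section is the only bookkeeping the paper leaves implicit.
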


\begin{proof}
This is a straightforward consequence of Lemma~\ref{th:72A} and  Proposition~\ref{th:66A}.
\end{proof}

\begin{remark}\cite[Remark 6.5]{bib:zantema}\label{rem:42}
If the action of $G$ on ${\Omega}$ is 2-transitive, then the action of $H$ on $\Omega\setminus\{H\}$ is transitive. Since $\Card(\Omega\setminus\{H\})=n-1\geq 2$, it follows from Burnside's formula that $T\not=\emptyset$.	
\end{remark}

In the following applications of Theorem~\ref{th:720}, $G:=\Gal(N_K/\SQ)$ is considered as isomorphic to a subgroup of the symmetric group $\Ss_n$ since, if $K$ is generated by $x$, the elements of $G$ are permutations of the $n$ conjugates $x=x_1,x_2,\ldots,x_n$ of $x$ over $\SQ$. Moreover, $H:=\Gal(N_K/K)$ is then the subgroup of $G$ which fixes $x$.

\begin{lemma}\label{th:718}
Conditions (\ref{eq:2B}) of Theorem~\ref{th:720} are satisfied if $G\simeq S_n$ with $n=3$ or $n\geq 5$, or if $G\simeq \Aa_n$ with $n=4$ or $n\geq 6$.
\end{lemma}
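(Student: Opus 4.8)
The plan is to make $H$ and the set $T$ completely explicit and then to reduce the two requirements in (\ref{eq:2B}) to elementary statements about derangements and their signatures. Recall from the paragraph preceding the lemma that, identifying $\Omega$ with $\{1,\dots,n\}$, the group $H$ is the stabiliser of the point $1$; hence in the case $G\simeq\Ss_n$ one has $H\simeq\Ss_{n-1}$ and in the case $G\simeq\Aa_n$ one has $H\simeq\Aa_{n-1}$, both acting on the $n-1$ remaining letters $\{2,\dots,n\}$. In either case an element of $H$ lies in $T$ exactly when its restriction to $\{2,\dots,n\}$ is a derangement (a fixed-point-free permutation) of these $n-1$ letters.

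First, for the symmetric case, I would use that $H'=[\Ss_{n-1},\Ss_{n-1}]=\Aa_{n-1}$, so that $H/H'\simeq\SZ/2\SZ$ and the condition $H=\langle T,H'\rangle$ is equivalent to $T$ containing at least one \emph{odd} permutation (which in particular forces $T\neq\emptyset$). Thus everything reduces to exhibiting an odd derangement of $n-1$ letters, and I would produce one by prescribing a cycle type built only from cycles of length $\ge 2$ with the correct signature: an $(n-1)$-cycle when $n-1$ is even (in particular a transposition when $n-1=2$), and the product of a transposition with an $(n-3)$-cycle when $n-1$ is odd and $\ge 5$. This accounts for $n=3$ and all $n\ge 5$. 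The single exception is $n-1=3$, i.e.\ $n=4$: the only derangements of three letters are the two $3$-cycles, both even, so $\langle T,H'\rangle=\Aa_3\subsetneq\Ss_3$ and (\ref{eq:2B}) fails---precisely the value excluded in the statement.

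For the alternating case $T$ becomes the set of \emph{even} derangements of $n-1$ letters, and I would split according to whether $\Aa_{n-1}$ is simple. When $n-1\ge 5$ (that is $n\ge 6$), $\Aa_{n-1}$ is simple and nonabelian, so $H'=H$ and (\ref{eq:2B}) holds as soon as $T\neq\emptyset$; I would again produce an even derangement by cycle type (an $(n-1)$-cycle if $n-1$ is odd, a transposition times an $(n-3)$-cycle if $n-1$ is even). The remaining admissible value $n=4$ gives $H\simeq\Aa_3$, cyclic of order $3$ generated by either of the two $3$-cycles which are exactly the elements of $T$, so the condition holds. To close the analysis I would verify that the two values omitted from the statement genuinely fail: for $n=3$ the group $H\simeq\Aa_2$ is trivial, whence $T=\emptyset$, and for $n=5$ one has $H\simeq\Aa_4$ with $T$ equal to the three double transpositions, which together with $H'$ (the Klein four-group) generate only that four-group, a proper subgroup of $\Aa_4$.

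The main obstacle is the parity bookkeeping together with the behaviour of the small alternating groups: the whole argument hinges on correctly computing $H/H'$ and on deciding, for each residue of $n-1$ modulo $2$, whether a derangement of the prescribed signature exists. The genuinely delicate points are the exceptional cases $n=4$ in the symmetric setting and $n=5$ in the alternating setting, where $\Aa_{n-1}$ is not simple ($\Aa_3$ cyclic, $\Aa_4$ with derived subgroup the Klein four-group); these force the only failures and must be treated by direct inspection rather than by the generic signature construction.
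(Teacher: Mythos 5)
Your proof is correct and follows essentially the same route as the paper: identify $T$ with the derangements of the $n-1$ non-fixed letters, reduce the symmetric case to exhibiting an odd derangement via $H/H'\simeq\SZ/2\SZ$, use perfectness of $\Aa_{n-1}$ for $n-1\geq 5$ in the alternating case, and treat $n=4$ with $G\simeq\Aa_4$ by hand. The only (harmless) divergence is that for $T\not=\emptyset$ in the alternating case you construct explicit even derangements by cycle type where the paper invokes $2$-transitivity and Burnside's formula; your additional verification that the excluded values $n=4$ (symmetric) and $n=3,5$ (alternating) genuinely fail is a nice bonus not present in the paper.
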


\begin{proof}
(a) Assume that $G:=\Gal(N_K/\SQ)\simeq\Ss_n$. Then, $H:=\Gal(N_K/K)\simeq \Ss_{n-1}$, $H'\simeq\Aa_{n-1}$\,, and $\Omega=\{\Ss_{n-1}\sigma_i\mid 1\leq i\leq n\}$ where $\sigma_i\in\Ss_n$ satisfies $\sigma_i(i)=n$. As $H/H'\simeq \Ss_{n-1}/\Aa_{n-1}\simeq\SZ/2\SZ$, both conditions  (\ref{eq:2B}) will surely be satisfied if there exists some $\tau\in\Ss_{n-1}\setminus \Aa_{n-1}$ which does not fix any symbol. We obtain such a $\tau$ by considering any cycle of length $n-1\geq 2$ in $\Ss_{n-1}$ if $n$ is odd (which leads to the constraint $n\geq 3$), and, if $n$ is even, by considering the product of two disjoint cycles of length $r\geq 2$ and $s\geq 2$ respectively such that $r+s=n-1$ (which leads to the constraint  $n\geq 5$). 

\smallskip

(b) Assume now that $G:=\Gal(N_K/\SQ)\simeq\Aa_n$. Then, $H:=\Gal(N_K/K)\simeq \Aa_{n-1}$. $\Aa_{n-1}$ acts transitively on the $n$ right cosets of $\Aa_n$ modulo $\Aa_{n-1}$ as soon as $n\geq 4$. It then follows from Remark~\ref{rem:42} that $T\not=\emptyset$. Moreover, as $H'\simeq\Aa_{n-1}$ for $n-1\geq 5$, conditions~(\ref{eq:2B}) are satisfied for $n\geq 6$. And also for $n=4$ since $\{1\}\subsetneq T\subseteq \Aa_3$.
	\end{proof}

Frobenius groups lead also to similar conclusions. Recall :

\begin{definition}
A {\em Frobenius group} is a permutation group $G$ which acts transitively on a finite set $X$ such that every $g\in G\setminus\{1\}$ does not fix more than one point and that there exists at least one $g\in G\setminus\{1\}$ which fix an $x\in X$.	
\end{definition}

\begin{lemma}
Conditions (\ref{eq:2B}) of Theorem~\ref{th:720} are satisfied if $G$ is a Frobenius group. 
\end{lemma}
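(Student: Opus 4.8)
The plan is to verify the two conditions in~(\ref{eq:2B}) directly from the defining properties of a Frobenius group. Recall that here $G=\Gal(N_K/\SQ)$ acts on $\Omega$, the set of right cosets of $H=\Gal(N_K/K)$ in $G$, and this is precisely the transitive action of a Frobenius group on $X=\Omega$, with $H$ playing the role of a point stabilizer (the stabilizer of the coset $H$ itself). The set $T$ consists of those $h\in H$ whose only fixed point in $\Omega$ is $H$. I will exploit the classical structure theory of Frobenius groups, in particular the existence of the Frobenius kernel.

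First I would establish that $T\neq\emptyset$, and in fact that $T=H\setminus\{1\}$. By the Frobenius defining condition, every nonidentity element fixes at most one point of $\Omega$; since each nonidentity $h\in H$ already fixes the point $H$, it can fix no other point, so $h\in T$. Thus $T=H\setminus\{1\}$, which is nonempty because $n\geq 3$ forces $\vert H\vert\geq 2$. This immediately takes care of the first condition $T\neq\emptyset$.

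Next I would verify $H=\langle T,H'\rangle$. Since $T=H\setminus\{1\}$, the subgroup $\langle T\rangle$ already contains every nonidentity element of $H$, hence $\langle T\rangle=H$, and therefore trivially $\langle T,H'\rangle=H$ as well. In fact the condition is satisfied in the strongest possible way: $H$ is generated by $T$ alone, so the derived subgroup $H'$ is not even needed. This is the essential simplification peculiar to Frobenius groups: because the point stabilizer is itself a ``Frobenius complement,'' all of its nonidentity elements act freely away from the fixed coset, so they all land in $T$.

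With both parts of~(\ref{eq:2B}) established, the conclusion that $\Cc l(K)$ is generated by the classes of the ideals $\Pi_p(K)$ for $p$ non-ramified follows at once by applying Theorem~\ref{th:720}. The only genuine point requiring care -- and hence what I expect to be the main obstacle -- is the bookkeeping identifying the abstract permutation action defining a Frobenius group with the specific coset action on $\Omega$: one must check that $H$ really is a point stabilizer for this action so that the Frobenius axiom ``at most one fixed point'' applies to the element $H\in\Omega$ with $H$ as its fixed point. Once that identification is made explicit, the argument is immediate and does not even invoke the Burnside-counting route used in Remark~\ref{rem:42} for the alternating case.
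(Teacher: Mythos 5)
Your argument is essentially the paper's: both proofs reduce to the observation that the Frobenius condition (a nonidentity element fixes at most one point, and every nonidentity $h\in H$ already fixes the coset $H$) forces $T=H\setminus\{1\}$, whence $\langle T\rangle=H$ and both conditions of (\ref{eq:2B}) hold simultaneously. One justification is off, however: $n\geq 3$ does \emph{not} force $\vert H\vert\geq 2$ --- for a Galois cubic field one has $N_K=K$ and $H=\{1\}$. What actually forces $H\neq\{1\}$ is the second Frobenius axiom: some nonidentity element fixes a point, and by transitivity all point stabilizers are conjugate, hence all nontrivial; this is precisely the reason the paper gives, and with that correction your proof is complete. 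Your preliminary worry about identifying the Frobenius permutation action with the coset action on $\Omega$ is settled by the paper's convention, stated just before Lemma~\ref{th:718}, that $G$ is regarded as a permutation group on the $n$ conjugates of a primitive element of $K$, which is equivalent to its action on $\Omega$ with $H$ a point stabilizer.
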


\begin{proof}
By transitivity the stabilizer of every element is not trivial, and hence, $H\not=\{1\}$. It follows then from the definition that $T=H\setminus\{1\}\not=\emptyset$.
\end{proof}

Now we can conclude:

\begin{theorem}\label{th:47x}
Let $K$ be a number field of degree $n\geq 3$. If the Galois group of the normal closure of $K$ is either isomorphic to $\Ss_n \; (n\not=4)$ or to $\Aa_n \; (n\not=3,5)$, or is a Frobenius group, then
$$\Cc l(K)= \Pp o(K) = \Pp o(K)_{nr} =\Pp o(K)_{nr1}\,. $$	
\end{theorem}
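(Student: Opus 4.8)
The plan is to deduce Theorem~\ref{th:47x} directly from Theorem~\ref{th:720} together with the group-theoretic verifications already assembled in Lemma~\ref{th:718} and the Frobenius lemma. The essential point is that Theorem~\ref{th:720} reduces the entire arithmetic conjecture to the purely combinatorial conditions~(\ref{eq:2B}), namely that $T\neq\emptyset$ and $H=\langle T,H'\rangle$, stated in terms of $G=\Gal(N_K/\SQ)$ acting on the cosets $\Omega$ of $H=\Gal(N_K/K)$. So first I would observe that the hypotheses on the Galois group partition into exactly the three cases for which conditions~(\ref{eq:2B}) have already been established: $G\simeq\Ss_n$ with $n\neq 4$ (equivalently $n=3$ or $n\geq 5$), $G\simeq\Aa_n$ with $n\neq 3,5$ (equivalently $n=4$ or $n\geq 6$, the case $n=3$ being excluded since $\Aa_3$ is abelian of order $3$ and the case $n=5$ being genuinely omitted), and $G$ a Frobenius group.

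Next I would apply the relevant lemma in each case. For $G\simeq\Ss_n$ or $G\simeq\Aa_n$ in the admissible ranges, Lemma~\ref{th:718} gives conditions~(\ref{eq:2B}) outright; for a Frobenius group the final lemma gives them. In all three cases conditions~(\ref{eq:2B}) hold, so Theorem~\ref{th:720} applies and yields that $\Cc l(K)$ is generated by the classes $\overline{\Pi_p(K)}$ for $p$ unramified in $K$, i.e. $\Cc l(K)\subseteq\Pp o(K)_{nr1}$.

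Finally I would assemble the chain of equalities. By their very definitions one has the obvious containments $\Pp o(K)_{nr1}\subseteq\Pp o(K)_{nr}\subseteq\Pp o(K)\subseteq\Cc l(K)$, since each successive group is generated by a larger family of ideal classes, all of which are classes in $\Cc l(K)$. Combining this with the reverse containment $\Cc l(K)\subseteq\Pp o(K)_{nr1}$ just obtained forces all four groups to coincide, which is precisely the asserted equality $\Cc l(K)=\Pp o(K)=\Pp o(K)_{nr}=\Pp o(K)_{nr1}$.

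I expect no genuine obstacle here: by the time Theorem~\ref{th:47x} is stated, all the substantive work — the description of the Artin symbol in Proposition~\ref{th:21a}, the reduction from $N_H$ to $N_K$ in Lemmas~\ref{th:719} and~\ref{th:72A}, the translation into conditions~(\ref{eq:2B}) in Theorem~\ref{th:720}, and the case-by-case combinatorial verification — has already been carried out. The only mild point worth care is bookkeeping: matching the excluded degrees ($n=4$ for $\Ss_n$, $n=3,5$ for $\Aa_n$) against the admissible ranges of Lemma~\ref{th:718}, and noting that the abelian cases ($\Aa_3\cong\SZ/3\SZ$, and the anomalous small symmetric/alternating coincidences) fall outside the hypotheses so that no separate argument is needed. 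Thus the proof is, as the author indicates, a straightforward synthesis of the preceding results.
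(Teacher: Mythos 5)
Your proposal is correct and follows exactly the route the paper intends: the theorem is obtained by verifying conditions~(\ref{eq:2B}) via Lemma~\ref{th:718} (for $\Ss_n$, $n\neq 4$, and $\Aa_n$, $n\neq 3,5$) and the Frobenius lemma, then invoking Theorem~\ref{th:720} and the trivial chain of containments $\Pp o(K)_{nr1}\subseteq\Pp o(K)_{nr}\subseteq\Pp o(K)\subseteq\Cc l(K)$. Nothing is missing.
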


For instance, the P\'olya group $\Pp o(K)$ of a non-galoisian cubic number field $K$ is always equal to the class group $\Cc l(K)$ of $K$.	


\begin{thebibliography}{20}

\bibitem{bib:bhargava1998}
M. Bhargava, Generalized Factorials and Fixed Divisors over Subsets of a Dedekind Domain, {\em J. Number Theory}  {\bf 72} (1998), 67--75.

\bibitem{bib:CC}
P.-J. Cahen and J.-L. Chabert, {\em Integer-Valued Polynomials}, Mathematical Surveys and Monographs, Vol. 48, Amer. Math. Soc., Providence, 1997.

\bibitem{bib:chabertI}
J.-L. Chabert, From P\'olya fields to P\'olya groups, (I) Galoisian extensions.

https://arxiv.org/pdf/1805.11585.pdf

\bibitem{bib:chabert2018}
J.-L. Chabert, From P\'olya fields to P\'olya groups, Conference on {\em Rings and Factorizations}, Graz, February 2018.

https://imsc.uni-graz.at/rings2018/slides/chabert.pdf


\bibitem{bib:janusz}
G.J. Janusz, {\em Algebraic number fields}, Academic Press, 1973.

\bibitem{bib:leriche3}
A. Leriche, About the embedding of a number field in a P\'olya field, {\em J. Number Theory} {\bf 145} (2014), 210--229.

\bibitem{bib:ostrowski}
A. Ostrowski, \"Uber ganzwertige Polynome in algebraischen
Zahlk\"orpern, {\em J. reine angew. Math.} {\bf 149} (1919), 117--124.

\bibitem{bib:polya}
G. P\'olya, \"Uber ganzwertige Polynome in algebraischen Zahlk\"orpern,
{\em J. reine angew. Math.} {\bf 149} (1919), 97--116.

\bibitem{bib:serre}
J.-P. Serre, {\em Corps locaux}, Hermann, Paris, 1962.


\bibitem{bib:zantema}
H. Zantema, Integer valued polynomials over a number field, {\em Manuscr.
Math.} {\bf 40} (1982), 155--203.

\end{thebibliography}
\end{document}